\documentclass[12pt]{amsproc}

\usepackage{fullpage}
\usepackage{amsmath}
\usepackage{amsfonts}
\usepackage{amssymb}
\usepackage{amsthm}
\usepackage[utf8]{inputenc}
\usepackage{breqn}
\usepackage{afterpage}
\usepackage{longtable}
\usepackage{indentfirst}
\usepackage{caption}
\usepackage{subcaption}
\usepackage{graphicx}
\graphicspath{ {./images/} }
\newtheorem{theorem}{Theorem}[section]
\newtheorem{lemma}[theorem]{Lemma}

\theoremstyle{definition}
\newtheorem{definition}[theorem]{Definition}

\newtheorem{theorem-definition}[theorem]{Theorem-Definition}

\theoremstyle{remark}
\newtheorem{remark}[theorem]{Remark}

\numberwithin{equation}{section}

\usepackage{color}
\usepackage{xcolor}

\newcommand{\C}{\mathbb{C}}

\begin{document}

\title{Capacity dimension of the Brjuno set in  ${\C}^n$}
\begin{author}[N.~Akramov]{Nurali Akramov}
    \address{National University of Uzbekistan,  Tashkent, Uzbekistan}
\email{nurali.akramov.1996$@$gmail.com}
\end{author}
\begin{author}[K.~Rakhimov]{Karim Rakhimov}

\address{V.I. Romanovskiy Institute of Mathematics of Uzbekistan Academy of Sciences,  Tashkent, Uzbekistan}
\email{karimjon1705$@$gmail.com}
\end{author}

\date{ }

\begin{abstract} In this work, we prove that the complement of the Brjuno set in $\mathbb{C}^n$ has zero $C_\sigma$-capacity with respect to the kernel $k_\sigma(z,\xi)=\|z-\xi\|^{-2n+2}|\log{\|z-\xi\||^{\sigma}}$ for any $\sigma>n$. In particular, it follows that it has  zero $h_\delta$-Hausdorff measure  with respect to the $h_\delta(t)=t^{2n-2}|\log{t}|^{-\delta}$, for any $\delta>n+1$. This generalizes a previous result of Sadullaev and the second author in dimension one to higher dimensions.

\end{abstract}

\maketitle

\section{Introduction}

The linearization of holomorphic germs near a fixed point is a central topic in the study of local holomorphic dynamics (see, for example, \cite{MAB}, \cite{FBR}, \cite{JRA2} and \cite{JRE}). Consider a holomorphic germ $f:(\mathbb{C}^n,0)\to(\mathbb{C}^n,0)$ that has a fixed point at the origin
\begin{equation}\label{eq:eq1}
  f(z)=\Lambda z+P_2(z)+...+P_d(z)+\dots,  
\end{equation}
where the linear part is given by the diagonal matrix $\Lambda=Df(0)=\text{diag}(\lambda_1,\lambda_2,...,\lambda_n)$ and $P_d(z):\mathbb{C}^n\to\mathbb{C}^n$ is a homogeneous polynomial of degree $d\ge2$. The fundamental question is whether $f$ is \textit{linearizable}, i.e., holomorphically conjugate to $\Lambda z$. Formally, we seek a holomorphic map $\varphi$, invertible in a neighborhood of the origin, such that
$$\varphi^{-1}(z)\circ{f(z)}\circ\varphi(z)=\Lambda{z}.$$ 

The Brjuno condition, introduced by A. Brjuno, provides a sharp criterion for linearizability in such cases (see \cite{ABR}).  Let us define the Brjuno condition. For $\lambda:=(\lambda_1, \lambda_2, ...,\lambda_n)\in\mathbb{C}^n$, and for an integer $m\ge{2}$, define
\begin{equation}\label{eq:OMEGA}
   \Omega(\lambda, m):=\min\{|\lambda^k-\lambda_j|: 2\le|k|\le{m},  
   1\le{j}\le{n}\}, 
\end{equation}
where $k=(k_1,k_2,...,k_n)\in\mathbb{N}^n$, $|k|=k_1+k_2+...+k_n$, and $\lambda^k=\lambda_1^{k_1}\lambda_2^{k_2}\lambda_3^{k_3}\dots\lambda_n^{k_n}$.

\begin{definition} \label{Brjuno1}
    Let $\lambda\in\mathbb{C}^n$  and $  \Omega(\lambda, m)$ be as in \eqref{eq:OMEGA}. The vector $\lambda$ is said to satisfy the \textit{Brjuno condition} if
    \begin{equation}\label{eq:BCO}
        \sum_{j=1}^\infty\frac{1}{2^j}\log{\frac{1}{\Omega(\lambda,2^j)}}<\infty.
    \end{equation}
\end{definition}
When $\Omega(\lambda, m)=0$ for some $m$ we say that $\lambda$ is \textit{resonant}, that is,  $\lambda$ is called resonant if there exists a multi-index $k=(k_1,k_2,\dots,k_n)\in\mathbb{N}^n$ such that  
$$\lambda^k-\lambda_j=\lambda_1^{k_1}\lambda_2^{k_2}\cdots\lambda_n^{k_n}-\lambda_j=0$$ 
for some $1\leq j\leq n$.
\begin{theorem}[{Brjuno \cite{ABR}}]
  Assume that  $\lambda\in\mathbb C^n$ is not resonant.  If $\lambda$ satisfies the Brjuno condition, then the holomorphic germ \eqref{eq:eq1} is holomorphically linearizable. 
\end{theorem}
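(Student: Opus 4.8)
The statement is Brjuno's classical theorem; the plan is to reproduce the majorant-series proof (Siegel's method, refined by Brjuno): solve the linearization equation \emph{formally}, using non-resonance to invert the homological operator in each degree, and then show that the formal solution converges, the Brjuno condition being exactly what prevents the small divisors from accumulating. Concretely, seek $\varphi(z)=z+\sum_{d\ge2}\varphi_d(z)$ with $\varphi_d\colon\mathbb C^n\to\mathbb C^n$ homogeneous of degree $d$, solving $f\circ\varphi=\varphi\circ\Lambda$. Collecting the homogeneous terms of degree $d$ on both sides turns this into the recursive chain
\[
\varphi_d(\Lambda z)-\Lambda\,\varphi_d(z)=Q_d(z),
\]
where $Q_d$ — the degree-$d$ part of $\sum_{m\ge2}P_m(\varphi(z))$ — is a fixed universal polynomial in the coefficients of $P_2,\dots,P_d$ and of $\varphi_2,\dots,\varphi_{d-1}$ only. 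Writing $\varphi_d=\sum_{|k|=d}\sum_{j=1}^{n}a_{k,j}\,z^{k}e_j$ in the standard basis $e_1,\dots,e_n$, the left-hand side has $z^ke_j$-coefficient $(\lambda^{k}-\lambda_j)\,a_{k,j}$; since $\lambda$ is not resonant, $\lambda^{k}-\lambda_j\neq0$ for all $|k|=d\ge2$ and all $j$, so $\varphi_d$ is uniquely obtained by dividing the coefficients of $Q_d$ by the small divisors $\lambda^k-\lambda_j$. This yields the unique formal conjugacy tangent to the identity; it remains to prove convergence.

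\emph{Unfolding into trees.} After a linear rescaling $z\mapsto\rho z$ we may assume $f$ is holomorphic on a neighbourhood of the closed unit polydisc with the $\ell^1$-norms of the Taylor coefficients of the $P_m$ bounded by a geometric sequence. Let $c_d$ be the $\ell^1$-norm of the coefficients of $\varphi_d$ (and $c_1:=1$). Iterating the recursion above together with the submultiplicativity of this norm expresses $c_d$ as a sum, over rooted trees $T$ with $d$ leaves in which every internal vertex has at least two children and whose vertices carry compatible multi-index decorations $(k_v,j_v)$ with $|k_v|=\#\{\text{leaves of }T\text{ above }v\}$, of weights of the form $\big(\prod_{v}\|P_{r_v}\|_{\ell^1}\big)\cdot\prod_{v\text{ internal}}|\lambda^{k_v}-\lambda_{j_v}|^{-1}$, where $r_v$ is the number of children of $v$. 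Since the $\|P_m\|_{\ell^1}$ decay geometrically and the number of such decorated trees with $d$ leaves is at most $C_1^{\,d}$ for an absolute constant $C_1$ (a Schröder/Catalan-type count, the decorations being constrained at each vertex), it suffices to bound $\prod_{v}|\lambda^{k_v}-\lambda_{j_v}|^{-1}$ geometrically in $d$.

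\emph{The Brjuno estimate — the crux.} The goal is a bound, for every decorated tree $T$ with $d$ leaves,
\[
\prod_{v\text{ internal}}\frac{1}{|\lambda^{k_v}-\lambda_{j_v}|}\ \le\ C_0^{\,d}\,\exp\!\Big(d\sum_{m\ge1}\frac{1}{2^m}\log\frac{1}{\Omega(\lambda,2^m)}\Big),
\]
whose exponent is finite precisely by the Brjuno condition \eqref{eq:BCO}. The engine is a Siegel-type arithmetic lemma. If $|\lambda^{k_v}-\lambda_{j_v}|<\tfrac12\Omega(\lambda,2^m)$ at a vertex $v$, then $|k_v|>2^m$, for otherwise $\lambda^{k_v}-\lambda_{j_v}$ would be one of the quantities in the minimum \eqref{eq:OMEGA}. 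Moreover, if $v'$ is an ancestor of $v$ on the same branch with $|\lambda^{k_{v'}}-\lambda_{j_{v'}}|<\tfrac12\Omega(\lambda,2^m)$ and no such vertex strictly between, the identity $\lambda^{k_{v'}}-\lambda_{j_{v'}}=\lambda^{\,k_{v'}-k_v}\big(\lambda^{k_v}-\lambda_{j_v}\big)+\big(\lambda^{\,k_{v'}-k_v+e_{j_v}}-\lambda_{j_{v'}}\big)$ shows that $|k_{v'}|-|k_v|$ must exceed $2^m$ (up to a factor depending only on $\|\lambda\|$), since otherwise the last term would violate \eqref{eq:OMEGA}. Hence along each branch the vertices whose divisor is smaller than $\tfrac12\Omega(\lambda,2^m)$ are spaced more than $2^m$ apart in $|k|$ (up to a constant factor); as $|k|\le d$ along any branch, counting these vertices over the dyadic scales $2^m$ and summing the corresponding logarithmic losses reproduces exactly the series in \eqref{eq:BCO}. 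Combined with the tree count this gives $c_d\le C^{\,d}$ for some $C=C(\lambda,f)$, so $\varphi(z)=z+\sum_{d\ge2}\varphi_d(z)$ converges uniformly on the polydisc of radius $1/(2C)$ to a holomorphic map with $D\varphi(0)=\mathrm{Id}$; by the inverse function theorem $\varphi$ is biholomorphic near $0$, and by construction $\varphi^{-1}\circ f\circ\varphi=\Lambda z$.

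The whole difficulty is this last step. A single divisor $\lambda^{k_v}-\lambda_{j_v}$ may be arbitrarily close to $0$, so the crude estimate in which every vertex of a tree realizes the worst possible divisor blows up faster than any geometric rate; the content of Brjuno's argument is that near-resonances of a prescribed quality are arithmetically rigid and cannot pile up along a branch, so that their aggregate effect is controlled by the convergent dyadic sum in the Brjuno condition. The formal computation, the Cauchy estimates, and the combinatorial bookkeeping of the trees are all routine.
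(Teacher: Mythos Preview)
The paper does not prove this theorem. It is quoted in the introduction as Brjuno's classical result, with a bare citation to the original paper, and serves only to motivate the study of the size of the non-Brjuno set; the paper's own work begins with Theorem~\ref{t:main}. There is therefore no proof in the paper to compare your proposal against.

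That said, your sketch is a faithful outline of the standard majorant-series argument: formal inversion of the homological operator degree by degree (using non-resonance), tree expansion of the resulting coefficients, and Brjuno's small-divisor counting lemma along branches. The algebraic identity you display for $\lambda^{k_{v'}}-\lambda_{j_{v'}}$ is correct and is indeed the hinge of the spacing argument. A few points are suppressed and would need to be filled in for a complete proof: the estimate on $\lambda^{k_{v'}-k_v}(\lambda^{k_v}-\lambda_{j_v})$ tacitly requires control on $|\lambda_i|$ (or a preliminary reduction); the case $|k_{v'}-k_v+e_{j_v}|<2$ must be handled separately; and turning ``spaced more than $2^m$ apart along each branch'' into the global bound $C_0^{\,d}\exp\big(d\sum_m 2^{-m}\log\Omega(\lambda,2^m)^{-1}\big)$ requires an induction over subtrees, not just a count along a single root-to-leaf path. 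These are exactly the technical steps in Brjuno's original proof, and none of them invalidates your plan.
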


In dimension 1, Yoccoz (see \cite{JCY}) showed that if $\lambda$ does not satisfy the Brjuno condition, then $f(z) = \lambda z + z^2$ is not holomorphically linearizable at the origin. However, in higher dimensions, it is not clear whether this remains true (see \cite{JRA1}).
 
In this paper, we are interested in the capacity dimension of the complement of the Brjuno set, i.e. the set which does not satisfy the Brjuno condition. For the case $n = 1$, A. Sadullaev and the second author  proved the following result (see \cite{AS1}). 
\begin{theorem}[{Sadullaev-Rakhimov, \cite{AS1}}]\label{t:sadrakh}
    The set of points in $\mathbb{C}$ that do not satisfy the Brjuno condition \eqref{eq:BCO} has  zero capacity with respect to the kernel $k_\sigma(z,\xi)=|\log{|z-\xi|}|^{\sigma}$,  $z,\xi\in\mathbb C,$ for any $\sigma>2$. 
 \end{theorem}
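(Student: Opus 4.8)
The plan is to realise the non-Brjuno set $E\subset\mathbb{C}$ as contained in a countable union of explicit ``resonance neighbourhoods'' whose $C_\sigma$-capacities are summable, and then to let the tail of that series tend to zero. Throughout I would normalise $C_\sigma(K)=1/W_\sigma(K)$ with $W_\sigma(K)=\inf\{\iint k_\sigma\,d\mu\,d\mu:\mu\in\mathcal P(K)\}$, so that $C_\sigma(K)=0$ means exactly that every probability measure on $K$ has infinite $k_\sigma$-energy. Two elementary facts drive the argument. First, since $k_\sigma(z,\xi)=|\log\|z-\xi\||^\sigma\ge 0$ always, splitting a measure over a finite partition of $\bigcup_1^N K_i$ and discarding the non-negative cross terms gives, by the standard Cauchy--Schwarz computation, $C_\sigma(\bigcup_1^N K_i)\le\sum_1^N C_\sigma(K_i)$, hence also countable subadditivity (for an open set this needs only a compactness reduction to the finite case). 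Second, for a disk one has the crude bound $C_\sigma(D(a,r))\le(\log\tfrac1{2r})^{-\sigma}$ for $0<r<\tfrac12$, because $|z-\xi|\le 2r$ forces $k_\sigma(z,\xi)\ge(\log\tfrac1{2r})^\sigma$ on $D(a,r)$.

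\emph{Localisation and the $\limsup$ cover.} First I would note that if $|\lambda|\ne 1$ and $\lambda\ne 0$ then $|\lambda^k-\lambda|=|\lambda|\,|\lambda^{k-1}-1|\ge|\lambda|\,\big||\lambda|-1\big|>0$ for every $k\ge 2$, so $\Omega(\lambda,m)$ is bounded below independently of $m$ and $\lambda$ satisfies \eqref{eq:BCO} automatically; since $\{0\}$ is polar, $E\subset\{0\}\cup\big(E\cap\{\tfrac12<|\lambda|<2\}\big)$. Next fix $c_j:=j^{-2}$, so $\sum_j c_j<\infty$, and set
\[
 G_j:=\Big\{\lambda:\Omega(\lambda,2^{j})<e^{-c_j 2^{j}}\Big\}\cap\Big\{\tfrac12<|\lambda|<2\Big\}.
\]
If $\lambda$ lies in only finitely many of the $G_j$, then $2^{-j}\log\tfrac1{\Omega(\lambda,2^{j})}\le c_j$ for all large $j$ (and $\lambda$ is not a root of unity, for otherwise it lies in cofinitely many $G_j$), whence $\sum_j 2^{-j}\log\tfrac1{\Omega(\lambda,2^{j})}<\infty$ and $\lambda\notin E$. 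Therefore $E\subset\{0\}\cup\limsup_{j}G_j$, and in particular $E\subset\{0\}\cup\bigcup_{j\ge N}G_j$ for every $N$.

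\emph{Capacity of a resonance neighbourhood.} This is the core estimate. On the annulus $\{\tfrac12<|\lambda|<2\}$ one has $\{|\lambda^{k}-\lambda|<\delta\}\subset\{|\lambda^{k-1}-1|<2\delta\}$, and for small $\delta$ the lemniscate $\{|\lambda^{k-1}-1|\le 2\delta\}$ breaks into one component around each $(k-1)$-st root of unity $\zeta$. Writing $\lambda^{k-1}-1=(\lambda-\zeta)Q_\zeta(\lambda)$ with $|Q_\zeta(\zeta)|=k-1$ and estimating $Q_\zeta'/Q_\zeta$ near $\zeta$ (the nearest other root being at distance $\gtrsim 1/k$, with logarithmic-derivative sum $\asymp k\log k$), one gets $|Q_\zeta|\asymp k-1$ on a disk of radius $\asymp (k\log k)^{-1}$ about $\zeta$, whence that component lies inside $D\big(\zeta,4\delta/(k-1)\big)$ — provided $\delta\lesssim 1/\log k$. (The naive inclusion with radius $(2\delta)^{1/(k-1)}$ is useless, since $\delta^{1/(k-1)}\to 1$.) Summing over $2\le k\le m$ and the $(k-1)$-st roots of unity, for $\delta\lesssim 1/\log m$,
\[
 \big\{\Omega(\lambda,m)<\delta\big\}\cap\Big\{\tfrac12<|\lambda|<2\Big\}\ \subset\ \bigcup_{k=2}^{m}\ \bigcup_{\zeta^{k-1}=1}D\Big(\zeta,\tfrac{4\delta}{k-1}\Big),
\]
a union of at most $\tfrac12 m^{2}$ disks, each of radius $<\delta$. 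Applying the two elementary facts above (and $\log\tfrac1{8\delta}\ge\tfrac12\log\tfrac1\delta$ for $\delta$ small) yields
\[
 C_\sigma\Big(\big\{\Omega(\lambda,m)<\delta\big\}\cap\big\{\tfrac12<|\lambda|<2\big\}\Big)\ \le\ 2^{\sigma-1}\,m^{2}\,\Big(\log\tfrac1\delta\Big)^{-\sigma}.
\]

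\emph{Conclusion, and the main obstacle.} Taking $m=2^{j}$ and $\delta=e^{-2^{j}/j^{2}}$ (so $\log\tfrac1\delta=2^{j}/j^{2}$, and $\delta\lesssim 1/\log m$ holds for all $j\ge j_{0}$), the last inequality gives
\[
 C_\sigma(G_j)\ \le\ 2^{\sigma-1}\,2^{2j}\,\Big(\tfrac{2^{j}}{j^{2}}\Big)^{-\sigma}\ =\ 2^{\sigma-1}\,j^{2\sigma}\,2^{(2-\sigma)j},
\]
and $\sum_{j\ge N}j^{2\sigma}2^{(2-\sigma)j}\to 0$ as $N\to\infty$ precisely because $\sigma>2$. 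Since $C_\sigma(\{0\})=0$, countable subadditivity then gives $C_\sigma(E)\le\sum_{j\ge N}C_\sigma(G_j)\to 0$, i.e. $C_\sigma(E)=0$. I expect the real obstacle to be the sharp lemniscate estimate: one must see that the sublevel set at level $k$ is genuinely a union of $k-1$ disks of radius $\asymp\delta/k$ rather than anything larger, because it is exactly the interplay between the number $\asymp m^{2}$ of disks, their radius $\asymp\delta/m$, and the scaling $C_\sigma(D_r)\asymp|\log r|^{-\sigma}$ that forces the exponent $\sigma>2$. (In the $\mathbb{C}^{n}$ case this is also where the work lies: the resonance sets $\{\lambda^{k}=\lambda_j\}$ are positive-dimensional, so ``a union of small disks'' must be replaced by an economical covering of tubular neighbourhoods of these algebraic hypersurfaces, with the factor $\|z-\xi\|^{-2n+2}$ in the kernel calibrated to that codimension.)
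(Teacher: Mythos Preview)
Your argument is correct and is genuinely different from the route taken in \cite{AS1} (which the present paper recalls in Section~\ref{PL}). Sadullaev--Rakhimov work in the $z=\frac{1}{2\pi i}\log\lambda$ coordinate, use the continued-fraction convergents $P_j/Q_j$ of a non-Brjuno real number to show that
\[
\sum_{j}\frac{\log^{2+\varepsilon}Q_{j+1}}{Q_j^{2+\varepsilon/4}}=+\infty,
\]
and then build the single finite atomic measure $\mu=\sum_{q\ge 2}\sum_{p=1}^{q-1}q^{-2-\varepsilon/4}\delta_{p/q}$ whose potential $U^{\mu}$ is identically $+\infty$ on the non-Brjuno set; property~(2) of $C_\sigma$ then gives the conclusion. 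Your proof instead runs a Borel--Cantelli covering: $E\subset\{0\}\cup\limsup_jG_j$, each $G_j$ is covered by $\lesssim 2^{2j}$ disks of radius $\lesssim e^{-2^{j}/j^{2}}$, and countable subadditivity together with the disk bound $C_\sigma(D_r)\le(\log\tfrac{1}{2r})^{-\sigma}$ makes the tail summable exactly when $\sigma>2$.

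Two remarks. First, since you already observe that $E\setminus\{0\}\subset\{|\lambda|=1\}$, you may intersect everything with the unit circle from the outset; then the set $\{|\lambda^{k-1}-1|<2\delta\}\cap S^{1}$ is simply a union of $k-1$ arcs of length $\asymp\delta/(k-1)$ (via $|\lambda^{k-1}-1|=2|\sin\tfrac{(k-1)\theta}{2}|$), which bypasses the lemniscate analysis you flagged as the main obstacle. Second, what each approach buys: the potential-theoretic proof produces an explicit finite measure witnessing $C_\sigma(E)=0$, and it is this construction that the present paper lifts to $\mathbb{C}^{n}$ by replacing Dirac masses at $p/q$ with Lebesgue measure on the hyperplanes $\{kw=p\}$. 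Your covering approach avoids continued fractions entirely and makes the arithmetic of the threshold $\sigma>2$ transparent (it is literally the balance between $\asymp m^{2}$ disks and $|\log\delta|^{-\sigma}$); its higher-dimensional analogue would, as you say, replace the disk cover by tubular neighbourhoods of the resonance hypersurfaces, an alternative to the paper's measure-based proof in Section~\ref{s:proofmain}.
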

For definitions and notation, we refer the reader to Section \ref{refs2}. Our main result is a generalization of Theorem \ref{t:sadrakh} for $n\ge2$. We use $\|\cdot\|$ to denote the Euclidean distance in $\mathbb{C}^n$.
\begin{theorem}\label{t:main}
    Let $n\ge2$ and $E$ be the  set of points in $\mathbb{C}^n$ which do not satisfy the Brjuno condition \eqref{eq:BCO}. Then $E$ has  zero capacity     with respect to the kernel 
    \begin{equation}
\label{eq:kzeta}
k_\sigma(z,\xi)=\frac{|\log{\|z-\xi\|}|^{\sigma}}{\|z-\xi\|^{2n-2}},\,z,\xi\in\mathbb C^n,  
    \end{equation} for any $\sigma>n$. In particular, $E$ has  zero $h_\delta$-Hausdorff measure  with respect to the $h_\delta(t)=t^{2n-2}|\log{t}|^{-\delta}$, for any $\delta>n+1$.  
 \end{theorem}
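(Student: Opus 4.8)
The strategy is a Borel--Cantelli argument: cover the complement of the Brjuno set by thin neighbourhoods of the resonance hypersurfaces $V_{k,l}=\{\lambda\in\mathbb{C}^n:\lambda^k=\lambda_l\}$, bound the $C_\sigma$-capacity of each such neighbourhood, and sum. Since $C_\sigma$ is monotone and countably subadditive and $\mathbb{C}^n$ is a countable union of balls, it suffices to prove $C_\sigma(E\cap\overline{B(0,R)})=0$ for every $R$. Fix such an $R$, fix a sequence $\theta_j\searrow 0$ with $\sum_j 2^{-j}\log(1/\theta_j)<\infty$ (for instance $\theta_j=e^{-2^j/j^2}$), and observe: if $\Omega(\lambda,2^j)\ge\theta_j$ for all large $j$ then, since $\Omega(\lambda,2^j)\le 1$ eventually, the Brjuno sum is bounded by a constant plus $\sum_j 2^{-j}\log(1/\theta_j)<\infty$, so $\lambda\notin E$; in particular this also covers the resonant points. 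Hence
\[
E\cap\overline{B(0,R)}\ \subseteq\ \bigcap_{N\ge 1}\bigcup_{j\ge N}\big(G_j\cap\overline{B(0,R)}\big),\qquad G_j:=\bigcup_{2\le|k|\le 2^j}\ \bigcup_{l=1}^{n}\{\lambda:\ |\lambda^k-\lambda_l|<\theta_j\},
\]
and by subadditivity $C_\sigma(E\cap\overline{B(0,R)})\le\sum_{j\ge N}C_\sigma(G_j\cap\overline{B(0,R)})$ for all $N$, so it is enough to prove $\sum_j C_\sigma(G_j\cap\overline{B(0,R)})<\infty$.

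The heart of the matter is a capacity estimate for a single sublevel set
\[
S_{k,l}(\epsilon):=\{\lambda\in\overline{B(0,R)}:\ |\lambda^k-\lambda_l|<\epsilon\},\qquad \epsilon\in(0,\tfrac12),
\]
of the form $C_\sigma\big(S_{k,l}(\epsilon)\big)\le C(n,R)\,|\log\epsilon|^{-\sigma}$, \emph{uniformly in} $k$. This is proved by covering $S_{k,l}(\epsilon)$ by $\lesssim_{n,R}\epsilon^{-(2n-2)}$ balls of radius $\asymp\epsilon$ — a Cartan/Remez-type bound for the polynomial $\lambda^k-\lambda_l$, whose resonance hypersurface $V_{k,l}$ has $(2n-2)$-volume bounded \emph{independently of the degree} $|k|$ in the relevant region (the crucial point: inside a polydisc of radius $<1$ the map $\lambda\mapsto\lambda^k$ is exponentially contracting, so $V_{k,l}$ is there a nearly flat graph or a coordinate hyperplane, and one reduces to this situation by covering $\mathbb{C}^n$ by polydiscs and their images under reciprocal coordinate changes) — together with the elementary fact that $C_\sigma(B_r)\asymp r^{2n-2}|\log r|^{-\sigma}$, which expresses that $k_\sigma$ is critical for the real dimension $2n-2$, and countable subadditivity. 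Granting this, and using that there are $\asymp 2^{jn}$ pairs $(k,l)$ with $2\le|k|\le 2^j$, one gets $C_\sigma(G_j\cap\overline{B(0,R)})\le C(n,R)\,2^{jn}|\log\theta_j|^{-\sigma}$; with $\theta_j=e^{-2^j/j^2}$ this equals $C(n,R)\,2^{j(n-\sigma)}j^{2\sigma}$, whose sum over $j$ converges precisely when $\sigma>n$. This yields $C_\sigma(E)=0$.

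The main obstacle is exactly this uniform-in-$k$ capacity bound, and more precisely the accompanying geometric fact that the resonance hypersurfaces have degree-independent $(2n-2)$-volume in the regions used to cover $\mathbb{C}^n$ — this is what keeps the sum over the $\asymp 2^{jn}$ admissible multi-indices from costing an extra factor $2^j$ (which would only give $\sigma>n+1$, as in the $n=1$ case of Theorem~\ref{t:sadrakh}); handling the part of $\mathbb{C}^n$ near the unit torus $\{|\lambda_i|=1\}$, where the contraction degenerates, is the delicate step and presumably requires passing to logarithmic coordinates and exploiting the spacing of the resulting affine slabs. Finally, the Hausdorff statement follows from the capacity one by a standard comparison: writing $k_\sigma(z,\xi)=1/h_\sigma(\|z-\xi\|)$ with $h_\sigma(t)=t^{2n-2}|\log t|^{-\sigma}$, and noting that $\int_0 \big(h_\delta/h_\sigma\big)(t)\,\tfrac{dt}{t}=\int_0|\log t|^{\sigma-\delta}\,\tfrac{dt}{t}<\infty$ whenever $\delta>\sigma+1$, zero $C_\sigma$-capacity forces zero $h_\delta$-Hausdorff measure; letting $\sigma\downarrow n$ gives the result for every $\delta>n+1$.
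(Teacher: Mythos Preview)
Your Borel--Cantelli framework is sound, but the central geometric claim is false, and this is a genuine gap that cannot be repaired within your approach. You assert that the resonance hypersurface $V_{k,l}=\{\lambda^k=\lambda_l\}$ has $(2n-2)$-volume bounded independently of $|k|$ in the relevant region. This fails precisely near the unit torus, which you yourself flag as ``the delicate step'': $V_{k,l}$ is an algebraic hypersurface of degree $|k|$, and by Lelong--Wirtinger its $(2n-2)$-volume in any ball meeting the torus is $\asymp|k|$, not $O(1)$. Your proposed fix---passing to logarithmic coordinates---does not help; it only makes the extra factor visible: under $z=\tfrac{1}{2\pi i}\log\lambda$ the single hypersurface $V_{k,l}$ becomes a family of $\asymp|k|$ parallel affine hyperplanes $\{k'z=p\}$, $p\in\mathbb{Z}$, intersecting a fixed compact set. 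Thus in a bounded region one is covering $\sum_{|k|\le 2^j}|k|\asymp 2^{j(n+1)}$ slabs (not $2^{jn}$), and subadditivity yields
\[
C_\sigma\big(G_j\cap\overline{B(0,R)}\big)\ \lesssim\ 2^{j(n+1)}\,|\log\theta_j|^{-\sigma}.
\]
Writing $L_j=|\log\theta_j|$, your two constraints become $\sum_j 2^{-j}L_j<\infty$ and $\sum_j 2^{j(n+1)}L_j^{-\sigma}<\infty$; elementary optimization shows these are simultaneously satisfiable only when $\sigma>n+1$. So your method gives at best $\sigma>n+1$, missing the target $\sigma>n$, and no ``spacing'' argument for the parallel slabs recovers the lost logarithm via a covering bound.

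The paper avoids this loss by a different mechanism. Rather than covering, it works in the logarithmic model (Definition~\ref{Brjuno2}), places a finite measure $\tilde\mu=\sum_{k,p}|k|^{-(n+1+\varepsilon/4)}\mu_{k,p}$ on the hyperplanes $\{kw=p\}$, and shows directly that the $k_\sigma$-potential $U^{\tilde\mu}$ is $+\infty$ on the non-Brjuno set (property~(2) of $C_\sigma$). The crucial gain of one power of the logarithm comes from \emph{integrating the kernel along each hyperplane} (Lemma~\ref{l:intineq}), which converts $|\log r|^{\sigma}$ into $|\log d|^{\sigma+1}$ at distance $d$; combined with the H\"older-type divergence criterion of Lemma~\ref{l:bruno=+infty}, this is exactly what yields $\sigma>n$ instead of $\sigma>n+1$. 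Your Hausdorff deduction at the end is correct and matches the paper's property~(3).
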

When $n = 1$, A. Sadullaev and the second author (see \cite{AS1}) used a number-theoretic approach (see Section \ref{PL}) to prove Theorem \ref{t:sadrakh}. However, in higher dimensions, such a direct number-theoretic approach is not available. While it might be tempting to assume that Theorem \ref{t:sadrakh} can be extended inductively to higher dimensions, this is not always the case. Even if two complex numbers $\lambda_1$ and $\lambda_2$ individually satisfy the Brjuno condition, their pair $\lambda = (\lambda_1, \lambda_2)$ may fail to belong to the Brjuno set, for instance, if the product $\lambda_1 \lambda_2$ does not satisfy the condition.

The paper is organized as follows.  In Section \ref{refs2} we recall the definitions and some properties of Haussdorff $h_{\delta}$-measure and $C_\sigma$-capacity.  In Section \ref{refs1} we define the Brjuno condition in a different context and study some properties. Finally, in Section \ref{s:proofmain} we prove our main result.
\subsection*{Acknowledgment}  The authors would like to thank V.I. Romanovskiy Institute of Mathematics of the Academy of Sciences of the Republic of Uzbekistan and National University of Uzbekistan for the warm welcome and the excellent work conditions. 

\section{Hausdorff measure and capacity} \label{refs2}
\subsection{$h$-Hausdorff measure}

Let $h:[0,r_0]\to [0,+\infty)$ be a strictly increasing continuous function with $h(0)=0$ and $r_0>0$. Let $E\subset\mathbb{R}^{n}$ be a bounded set and fix positive $\varepsilon$ with $\varepsilon<r_0$. Consider a cover of $E$ by a finite collection of open balls $\{B_j(x_j, r_j)\}_{j=1}^m$ such that $r_j < \varepsilon$ for all $1 \le j \le m$, where $m$ depends on the chosen cover. Define 
$$H^{h}(E,\varepsilon)=\inf\left\{\sum_{j=1}^m h(r_j):\text{ } \bigcup_{j=1}^m{B_j}\supset E \right\}.$$
It is clear that $H^{h}(E,\varepsilon)$ is an increasing function of $\varepsilon$. Then, the limit
$$H^{h}(E)=\lim_{\varepsilon\to0+}H^{h}(E,\varepsilon)$$
exists and is called the \textit{$h$-Hausdorff measure} of $E$.  When $h_{\alpha}(t)=t^\alpha,$  $\alpha>0$,  the measure $H^{h_\alpha}(E)$ is known as the classic $\alpha$-Hausdorff measure of $E$.

\subsection{$C_\sigma$-capacity} $\mathbb C^n$-capacities are one of the important tools in pluripotential theory. In particular, their null sets are pluripolar sets, which vanish $t^{2n-2}|\log t |^{-\delta}$-Haussdorf measure  for any $\delta>1$. Many researchers, including A. Sadullaev, E. Bedford, and B.A. Taylor, have made significant contributions in this area (see  \cite{ERB},\cite{AS1},\cite{AS2}). In this section, we define a $\mathbb C^n$-capacity as in \cite{NSL}, such that its null set is slightly larger than pluripolar sets.

Let $K\subset\mathbb{C}^n$ be compact and $$k_\sigma(z,\xi)=\frac{|\log||z-\xi|||^\sigma}{||z-\xi||^{2n-2}},\,\,z,\xi\in\mathbb{C}^n,$$
where  $\sigma>0$. Denote by $\mathring{M}_K^+$ the set of positive probability measures $\mu$, with $|\mu| = 1$, supported in $K$.
The following integral 
$$U^\mu(z)=\int_K k_\sigma(z,\xi)d\mu(\xi)$$
is called the \textit{potential} of measure $\mu\in \mathring{M}_K^{+}$. 
 Let
$$I(\mu)=\int\limits_K U^\mu(z)d\mu(z)$$
and $W(K)=\inf\{I(\mu):\mu\in \mathring{M}_K^+\}$. Then \textit{$C_\sigma$-capacity} of $K$ is defined as
$$C_\sigma(K):=\frac{1}{W(K)}.$$
For an arbitrary set $E\subset\mathbb{C}^n$ the inner capacity is defined as $$\underline{C}_\sigma(E)=\sup_{K\subset{E}}C_\sigma(K)$$ and the outer capacity as $\overline{C}_\sigma(E)=\inf_{G\supset{E}}\underline{C}_\sigma(G)$ where $G$ is an open set. The classic properties of $C_\sigma$ capacity from the general theory of capacities (see \cite{LCARL}, \cite{NSL}).
\begin{enumerate}
    \item For every Borel set $E\subset\mathbb{C}^n$: $\overline{C}_\sigma(E)=\underline{C}_\sigma(E)=C_\sigma(E)$.
    \item The capacity $C_\sigma(E)=0$, if and only if there exists a finite Borel measure $\mu\in\overset{\circ}  {M^+_E}$ such that $U^\mu(z)\equiv+\infty$.
    \item  If $n\ge 2$ and $C_\sigma(E)=0$, then the Hausdorff $h_{\delta}$-measure  of $E$ with respect to the gauge function $h(t)=t^{2n-2}|\log t|^{-\delta}$ is zero for any $\delta>\sigma+1$ (see \cite{NSL}).
    \item  For the sequence of compact sets $\{K_j\}_{j=1}^{\infty}$, the capacity satisfies:
    $$C_\sigma\bigg(\bigcup_{j=1}^\infty K_j\bigg)\le\sum_{j=1}^{\infty}C_\sigma(K_j).$$
    \item Proper analytic subsets of $\mathbb{C}^n$ have zero $C_\sigma$-capacity. 
    \item Let $U, V\subset \mathbb{C}^n$ be open sets and $\phi:U\to V$ be a conformal map. If $C_\sigma(E)=0$ for $E\subset U$, then $C_\sigma(\phi(E))=0.$
    \end{enumerate}
The following technical lemma will be needed later.
 \begin{lemma}\label{l:intineq}
     Let $\sigma$, $a$ and $\eta$ be positive numbers satisfying $a<\eta<\frac{1}{2}$ and $n\ge 2$ be an integer. Then there exist positive $A_1,A_2$ and $B_1,B_2$ independent of $a$ such that we have
     $$A_1 |\log a|^{\sigma+1}-A_2\le\int_0^\eta\frac{|\log (r^2+a)|^\sigma}{(r^2+a)^{n-1}}r^{2n-3}dr\le B_1 |\log a|^{\sigma+1}+B_2.$$
 \end{lemma}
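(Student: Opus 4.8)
The plan is to turn the integral into an elementary one-variable estimate by two changes of variable, and then to obtain the two-sided bound by comparing $(t-a)^{n-2}$ with $t^{n-2}$ on appropriate subintervals. First I would substitute $u=r^2$ and then $t=u+a$. Since $r^{2n-3}\,dr=\tfrac12 u^{n-2}\,du$, the integral
$$J(a):=\int_0^\eta\frac{|\log(r^2+a)|^\sigma}{(r^2+a)^{n-1}}r^{2n-3}\,dr$$
becomes
$$J(a)=\frac12\int_a^{\eta^2+a}\frac{|\log t|^\sigma}{t^{n-1}}\,(t-a)^{n-2}\,dt .$$
Throughout the range one has $0<a\le t\le\eta^2+a<1$ (here I use $a<\eta<\tfrac12$), so $|\log t|=-\log t$; the elementary primitive $\int t^{-1}|\log t|^\sigma\,dt=-(\sigma+1)^{-1}|\log t|^{\sigma+1}$ will be the workhorse of both estimates.

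For the upper bound I would simply use $0\le t-a\le t$, hence $(t-a)^{n-2}\le t^{n-2}$, which gives
$$J(a)\le\frac12\int_a^{\eta^2+a}\frac{|\log t|^\sigma}{t}\,dt=\frac{|\log a|^{\sigma+1}-|\log(\eta^2+a)|^{\sigma+1}}{2(\sigma+1)}\le\frac{|\log a|^{\sigma+1}}{2(\sigma+1)},$$
so one may take $B_1=\tfrac1{2(\sigma+1)}$ and $B_2=1$.

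The lower bound is the only place requiring care, since $(t-a)^{n-2}$ degenerates at the left endpoint $t=a$ and cannot be compared with $t^{n-2}$ on the whole interval, and moreover the claimed bound $A_1|\log a|^{\sigma+1}-A_2$ is vacuous unless $a$ is small. I would split into two cases. If $a<\eta^2$, I restrict the integral to $t\in[2a,\eta^2+a]$ — a genuine interval precisely because $a<\eta^2$ — where $t-a\ge t/2$, so $(t-a)^{n-2}\ge 2^{-(n-2)}t^{n-2}$ and
$$J(a)\ge\frac1{2^{n-1}}\int_{2a}^{\eta^2+a}\frac{|\log t|^\sigma}{t}\,dt=\frac{|\log(2a)|^{\sigma+1}-|\log(\eta^2+a)|^{\sigma+1}}{2^{n-1}(\sigma+1)} .$$
Since $a<\eta^2<\tfrac14$ one has $|\log(2a)|=|\log a|-\log 2\ge\tfrac12|\log a|$, whence $|\log(2a)|^{\sigma+1}\ge 2^{-(\sigma+1)}|\log a|^{\sigma+1}$; and $|\log(\eta^2+a)|\le|\log\eta^2|=2|\log\eta|$ is bounded independently of $a$. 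This yields the asserted lower bound with $A_1=2^{-(n+\sigma)}(\sigma+1)^{-1}$ and $A_2=2^{-(n-1)}(\sigma+1)^{-1}(2|\log\eta|)^{\sigma+1}$.

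If instead $\eta^2\le a<\eta$, then $|\log a|\le|\log\eta^2|=2|\log\eta|$, so with the same constants $A_1|\log a|^{\sigma+1}-A_2\le A_1(2|\log\eta|)^{\sigma+1}-A_2<0\le J(a)$, the strict inequality holding because $2^{n+\sigma}>2^{n-1}$; thus the lower bound is trivially satisfied. These two cases exhaust $a\in(0,\eta)$, completing the argument. The step I expect to cost the most effort is exactly this case analysis — reconciling the uniform-in-$a$ lower bound with the degeneracy of $(t-a)^{n-2}$ near $t=a$, and verifying that the constants chosen in the small-$a$ regime remain compatible with the (trivial) bound in the moderate-$a$ regime.
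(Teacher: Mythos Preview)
Your proof is correct. The substitution $t=r^2+a$ and the upper bound via $(t-a)^{n-2}\le t^{n-2}$ coincide with the paper's argument. For the lower bound, however, the paper proceeds differently: it treats $n=2$ by an exact primitive and, for $n\ge 3$, expands $(t-a)^{n-2}$ binomially as $\sum_{j}(-a)^jC_{n-2}^j\,t^{n-2-j}$, keeps the $j=0$ term $\int_a^{\eta^2+a}|\log t|^\sigma t^{-1}\,dt$, and shows the remaining terms contribute only $O(|\log a|^\sigma)$ since $a^j\int_a^{\eta^2+a}t^{-j-1}\,dt$ is bounded. Your route---throwing away $[a,2a]$, using $(t-a)\ge t/2$ on $[2a,\eta^2+a]$, and handling $\eta^2\le a<\eta$ by showing the claimed lower bound is nonpositive there---is more elementary: it avoids the binomial bookkeeping, works uniformly for all $n\ge 2$ without a separate $n=2$ case, and yields explicit constants. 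The paper's approach, on the other hand, makes the structure of the error term (lower order in $|\log a|$) more transparent.
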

\begin{proof}
Assume first $n=2$. Then we have
    \begin{align*}
       \int_0^\eta\frac{|\log (r^2+a)|^\sigma}{r^2+a}rdr &=  \frac{1}{2}\int_0^\eta{(-\log (r^2+a))^\sigma}d\log (r^2+a)\\
       &=-\frac{1}{2(\sigma+1)}{(-\log (r^2+a))^{\sigma+1}}|_0^\eta\\
       &=\frac{1}{2(\sigma+1)}{|\log a|^{\sigma+1}}-\frac{1}{2(\sigma+1)}{|\log (a+\eta^2)|^{\sigma+1}}.
    \end{align*}
    So in this case we take $A_1=B_1=\frac{1}{2(\sigma+1)}$ and $A_2=\frac{1}{2(\sigma+1)}{|\log \eta|^{\sigma+1}}$ and $B_2=0.$ 
    
Assume now $n\ge 3$. Denote $t=r^2+a$. Then we have 

\begin{align*}
    \int_0^\eta\frac{|\log (r^2+a)|^\sigma}{(r^2+a)^{n-1}}r^{2n-3}dr&=\frac12\int_a^{\eta^2+a}\frac{|\log t|^\sigma}{t^{n-1}}(t-a)^{n-2}dt\\
    &=\frac12\int_a^{\eta^2+a}\frac{|\log t|^\sigma}{t}dt+\frac{1}{2}\sum_{j=2}^{n-2}(-1)^ja^jC_{n-2}^j\int_a^{\eta^2+a}\frac{|\log t|^\sigma}{t^j}dt\\
    &\ge \frac{1}{2(\sigma+1)}(|\log a|^{\sigma+1}-|\log (a+\eta^2)|^{\sigma+1})\\
    &-\frac{1}{2}\sum_{j=2}^{n-2}a^jC_{n-2}^j|\log a|^\sigma\int_a^{\eta^2+a}\frac{1}{t^{j}}dt\ge A_1 |\log a|^{\sigma+1}-A_2  
\end{align*}
for some positive $A_1,A_2$ independent of $a$. On the other hand, we have

\begin{align*}
    \int_0^\eta\frac{|\log (r^2+a)|^\sigma}{(r^2+a)^{n-1}}r^{2n-3}dr&=\frac12\int_a^{\eta^2+a}\frac{|\log t|^\sigma}{t^{n-1}}(t-a)^{n-2}dt\\
    &\le \frac12\int_a^{\eta^2+a}\frac{|\log t|^\sigma}{t}dt\\
    &\le B_1 |\log a|^{\sigma+1}+B_2  
\end{align*}
for some $B_1,B_2$ independent of $a$.
\end{proof}

\section{Brjuno condition}\label{refs1}

In this section, we introduce the necessary definitions and concepts related to the Brjuno condition. In particular, we define the Brjuno condition in a different, yet equivalent, context. Denote the set of integer vectors $$\mathbb{N}_j=\{k=(k_1,k_2,\dots,k_n)\in\mathbb{Z}^n: k_i\ge 0,i\ne j, k_j\ge -1 \}$$ where one coordinate $k_j$ is permitted to take values not less than $-1$, while all others are non-negative. Define $\mathbb{N}_0$ as the union of these sets
$\mathbb{N}_0=\cup_{j=1}^n \mathbb N_j.$ For $k=(k_1,k_2,...,k_n)\in\mathbb{N}_0$ denote $|k|=k_1+k_2+...+k_n$ and $kz=k_1z_1+...+k_nz_n$ where $z=(z_1,z_2,...,z_n)\in \mathbb{C}^n$. By $B(z,r)$ we denote the ball with radius $r>0$ and centered at $z$.

Let $z\in\mathbb{C}^n$. For an integer $m\ge{2}$, define
\begin{equation}\label{eq:omega}
\omega(z, m):=\min\{|kz-p|: 1\le|k|\le{m},k\in\mathbb{N}_0, p\in\mathbb{Z}\}.  
\end{equation}
Note that $kz-p=k_1z_1+...+k_nz_n-p$ is a scalar.

\begin{definition}\label{Brjuno2}
    Let $z\in\mathbb{C}^n$ and $\omega(z, m)$ be as defined in \eqref{eq:omega}. We say that $z$  satisfies the \textit{Brjuno condition} (with respect to $\omega(z, m)$) if
    
    \begin{equation}\label{eq:BC}
       \sum_{j=1}^\infty\frac{1}{2^j}\log{\frac{1}{\omega(z, 2^j)}}<\infty.
    \end{equation}
\end{definition}

We denote by $\mathcal{B}_n$ the Brjuno set, defined as the set of all points $z\in\mathbb{C}^n$ satisfying the Brjuno condition \eqref{eq:BC}.
\begin{remark}\label{re:equivBrj}
 Although Definitions \ref{Brjuno1} and \ref{Brjuno2} are not formally equivalent—due to the difference between expressions \eqref{eq:OMEGA} and \eqref{eq:omega}—we can see that they are actually related. Let $\lambda = (\lambda_1, \lambda_2, \dots, \lambda_n) \in \mathbb{C}^n$ with $\lambda_1 \cdots \lambda_k \neq 0$. Then it is straightforward to verify that $\lambda$ satisfies the Brjuno condition according to Definition \ref{Brjuno1} if and only if$$z=(z_1,\dots,z_n)=\left(\frac{1}{2\pi{i}}\log{\lambda_1}, \dots,\frac{1}{2\pi{i}}\log{\lambda_n}\right)$$ satisfies the Brjuno condition in the sense of Definition \ref{Brjuno2}.  Indeed, the assertion follows by the following elementary fact: when $r\to 0$ and $\|\alpha\|_{\mathbb{Z}}$ are small enough, there exists $c_1,c_2>0$ independent of $\alpha$ and $r$ such that  $$c_1|\|\alpha\|_{\mathbb{Z}}+ir|^2\le |e^{2\pi r}e^{2\pi i\alpha}-1|^2\le c_2 |\|\alpha\|_{\mathbb{Z}}+ir|^2, $$
where $\|\alpha\|_{\mathbb{Z}}$  is the distance from $\alpha$ to $\mathbb{Z}$.
\end{remark}

\subsection{Dimension 1.}{\label{PL}} It is clear that when $n=1$, all non-real numbers satisfy the Brjuno condition \eqref{eq:BC}.  For $\alpha\in\mathbb{R}$, there is a number theoretical approach to the Brjuno condition. Namely, if $\alpha$ is a rational number, then it clearly does not satisfy \eqref{eq:BC}. If $\alpha$ is an irrational number, then we can write it as 
$$\alpha=\frac{1}{a_1+\frac{1}{a_2+\frac{1}{a_3+\cdots}}}=:[a_1,a_2,a_3,...].$$
 A finite part $[a_1,a_2,a_3,...,a_j]=\frac{P_j}{Q_j}$ of the continued fraction becomes a rational number. Moreover, $\{\frac{P_j}{Q_j}\}$ is the fastest convergence sequence to $\alpha$. Then $\alpha$ satisfies Brjuno condition (see \cite{ABR}) if and only if
 $$\sum_{j=1}^\infty\frac{\log Q_{j+1}}{Q_j}<+\infty.$$

Let us review the main steps of the proof of Theorem \ref{t:sadrakh}.
If $\alpha\in\mathbb{R}\setminus\mathbb{Q}$ does not satisfy  Brjuno condition then for any $\varepsilon>0$ we have (see \cite{AS1})
\begin{equation}\label{eq:qn=infty}
\sum_{j=1}^\infty\frac{\log^{2+\varepsilon} Q_{j+1}}{Q_j^{2+\frac{\varepsilon}{4}}}=+\infty. 
\end{equation}
Using \eqref{eq:qn=infty} and some other properties of continued fractions Sadullaev and the second author proved that the potential
$$ U(z)=\int |\log |z-p/q||^{2+\varepsilon} d\mu = \sum_{q=2}^\infty\sum_{p=1}^{q-1}\frac{|\log |z-p/q||^{2+\varepsilon}}{q^{2+\frac{\varepsilon}{4}}}$$
diverges when $z\in \mathcal B_1\cap [0,1]$ where
\begin{equation}\label{eq:mun=1}
\mu:=\sum_{q=2}^\infty\sum_{p=1}^{q-1}\frac{\delta_{\frac{p}{q}}}{q^{2+\frac{\varepsilon}{4}}}. 
\end{equation}
As usual, $\delta_a$ denotes the Dirac measure at $a$. Then the property (2) of $C_\sigma$-capacity implies that $C_\sigma$-capacity of the complement of $\mathcal{B}_1$ vanishes. So Theorem \ref{t:sadrakh} follows.

\subsection{Preparatory lemmas} To prove our main result, we need a replacement for \eqref{eq:qn=infty} and \eqref{eq:mun=1} in higher dimensions. In this subsection we prove a result  that plays the role of   \eqref{eq:qn=infty} in dimension $n\ge 2$. Take $z\in \mathbb{C}^n$ with $n\ge 2$. It is clear that if $\omega(z,2^j)$ is uniformly bounded from below by a positive constant, then \eqref{eq:BC} holds and $z\in\mathcal{B}_n$. 

\begin{lemma}\label{l:sequence}
   Let $z\in \mathbb{C}^n$. Assume $\omega(z,2^j)\to 0$ as $j\to \infty$. Then there exists a strictly increasing sequence of positive integers $\{j_m\}$ with $j_1=1$ such that
   \begin{equation}\label{eq:sequence}
       \omega(z,2^{j_m})= \omega(z,2^{j_m+1})=...=\omega(z,2^{j_{m+1}-1})>\omega(z,2^{j_{m+1}}).
   \end{equation}
Moreover, \eqref{eq:BC} holds if and only if
\begin{equation}\label{eq:BCjk}
     \sum_{m=1}^\infty\frac{1}{2^{j_m}}\log{\frac{1}{\omega(\lambda, 2^{j_m})}}<\infty.
\end{equation}

\end{lemma}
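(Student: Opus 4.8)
The plan is to extract the sequence $\{j_m\}$ directly from the monotonicity structure of $m\mapsto\omega(z,m)$. First observe that $\omega(z,m)$ is non-increasing in $m$, since enlarging $m$ enlarges the set over which the minimum in \eqref{eq:omega} is taken. Moreover $\omega(z,m)$ takes only finitely many values on any finite range of $m$, so on the dyadic scale the sequence $a_j:=\omega(z,2^j)$ is a non-increasing sequence of positive reals with $a_j\to 0$ by hypothesis. Define $j_1=1$ and inductively let $j_{m+1}$ be the smallest index $j>j_m$ with $a_j<a_{j_m}$; such a $j$ exists because $a_j\to 0$. By construction $a_{j_m}=a_{j_m+1}=\dots=a_{j_{m+1}-1}>a_{j_{m+1}}$, which is exactly \eqref{eq:sequence}, and the resulting sequence $\{j_m\}$ is strictly increasing with $j_1=1$ as required. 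One should note that this exhausts all indices: every $j\ge 1$ lies in exactly one block $[j_m,j_{m+1})$.

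For the equivalence of \eqref{eq:BC} and \eqref{eq:BCjk}, write the tail of the series in \eqref{eq:BC} by grouping the indices into the blocks $[j_m,j_{m+1})$ on which $a_j$ is constant equal to $a_{j_m}$:
$$\sum_{j=1}^\infty\frac{1}{2^j}\log\frac{1}{a_j}=\sum_{m=1}^\infty\log\frac{1}{a_{j_m}}\sum_{j=j_m}^{j_{m+1}-1}\frac{1}{2^j}=\sum_{m=1}^\infty\log\frac{1}{a_{j_m}}\left(\frac{1}{2^{j_m-1}}-\frac{1}{2^{j_{m+1}-1}}\right).$$
Since $a_j\to 0$ we have $\log\frac{1}{a_{j_m}}>0$ for all large $m$, so this is a series of (eventually) non-negative terms, and the two-sided estimate
$$\frac{1}{2^{j_m}}\log\frac{1}{a_{j_m}}\le\log\frac{1}{a_{j_m}}\left(\frac{1}{2^{j_m-1}}-\frac{1}{2^{j_{m+1}-1}}\right)\le\frac{1}{2^{j_m-1}}\log\frac{1}{a_{j_m}}$$
shows that the block-sum converges if and only if $\sum_m 2^{-j_m}\log(1/a_{j_m})$ converges, which is \eqref{eq:BCjk}. (The left inequality uses $j_{m+1}\ge j_m+1$; the right inequality just drops the negative term.)

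The only genuinely delicate point is making sure the construction of $\{j_m\}$ is well-posed — i.e. that the "smallest $j>j_m$ with $a_j<a_{j_m}$" always exists; this is where the hypothesis $a_j\to 0$ is used, and it is the step I would state most carefully, since without it the sequence $\{j_m\}$ could terminate. The rest is the elementary rearrangement above, valid because all but finitely many terms are non-negative, so convergence is unaffected by the finitely many (possibly negative) initial terms where $a_{j_m}>1$.
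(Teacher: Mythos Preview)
Your proof is correct and follows essentially the same approach as the paper: both construct $\{j_m\}$ from the monotonicity of $j\mapsto\omega(z,2^j)$ together with $\omega(z,2^j)\to 0$, and both obtain the equivalence by grouping the full series into the constant blocks $[j_m,j_{m+1})$ and bounding the resulting geometric sums by $2\cdot 2^{-j_m}$. Your version is in fact slightly more careful, since you make explicit the inductive definition of $j_{m+1}$ and you address the finitely many indices where $\log(1/a_{j_m})$ may be negative, a point the paper leaves implicit.
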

\begin{proof}
Since $\omega(z,2^j)\to0$ as $j\to\infty$, it is clear that there is a unique sequence $\{j_m\}$ satisfying the above condition. In order to prove the second conclusion, it is easy to clarify
\begin{align*}
    \sum_{m=1}^\infty\frac{1}{2^{j_m}}\log{\frac{1}{\omega(\lambda, 2^{j_m})}}&\le  \sum_{j=1}^\infty\frac{1}{2^j}\log{\frac{1}{\omega(\lambda, 2^j)}}\\ &=  \sum_{m=1}^\infty\frac{1}{2^{j_m}}\sum_{j=j_m}^{j_{m+1}-1}\frac{1}{2^{j-j_m}}\log{\frac{1}{\omega(\lambda, 2^{j_m})}}\\
    &\le 2 \sum_{m=1}^\infty\frac{1}{2^{j_m}}\log{\frac{1}{\omega(\lambda, 2^{j_m})}}.
\end{align*}
       Hence, \eqref{eq:BCjk} holds if and only if \eqref{eq:BC} holds.
\end{proof}

The following lemma serves as a replacement for \eqref{eq:qn=infty} in dimension $n\ge2$. 
\begin{lemma}\label{l:bruno=+infty}
    Let $K\subset \mathbb{C}^n$ be a compact set. Then there exists $l\in \mathbb{N}$ depending only on $K$ such that if $z\in K$ does not satisfy the Brjuno condition {\eqref{eq:BC}}, then we have
    \begin{equation}\label{eq:n+1+varep=+}
        \sum_{|k|=1,\text{ } k\in\mathbb{N}_0}^\infty\sum_{|p|=0}^{l(|k|+1)}\frac{\left|\log |kz-p|^2\right|^{n+1+\varepsilon}}{|k|^{n+1+\frac{\varepsilon}{4}}}=+\infty,
    \end{equation}
  for any $\varepsilon>0$. 
\end{lemma}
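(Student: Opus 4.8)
The plan is to reduce the double sum in \eqref{eq:n+1+varep=+} to the one‑dimensional divergence \eqref{eq:qn=infty}, by isolating from each dyadic block produced by Lemma \ref{l:sequence} the integer vector that realizes the minimum in \eqref{eq:omega}, checking it is an admissible entry of the sum, and summing only over these entries. First, if $\omega(z,2^j)=0$ for some $j$, then $kz=p$ for some admissible $k\in\mathbb N_0$, $1\le|k|\le 2^j$, and $p\in\mathbb Z$ with $|p|=|kz|\le l(|k|+1)$ for the $l=l(K)$ chosen below; the corresponding term of \eqref{eq:n+1+varep=+} is then $+\infty$ and there is nothing to prove. So assume $\omega(z,2^j)>0$ for all $j$. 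Since $z$ is not Brjuno, the non‑increasing sequence $\{\omega(z,2^j)\}$ is not bounded below by a positive constant, hence $\omega(z,2^j)\to 0$, and Lemma \ref{l:sequence} yields a strictly increasing $\{j_m\}$ obeying \eqref{eq:sequence} together with $\sum_m 2^{-j_m}\log(1/\omega(z,2^{j_m}))=+\infty$. For each $m$ fix $k^{(m)}\in\mathbb N_0$ with $1\le|k^{(m)}|\le 2^{j_m}$ and $p^{(m)}\in\mathbb Z$ with $\delta_m:=|k^{(m)}z-p^{(m)}|=\omega(z,2^{j_m})$.

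Three facts about these choices are needed. (i) By \eqref{eq:sequence}, for $m\ge 2$ one has $\omega(z,2^{j_m-1})=\omega(z,2^{j_{m-1}})>\omega(z,2^{j_m})$, so the minimizer $k^{(m)}$ cannot have $|k^{(m)}|\le 2^{j_m-1}$; therefore $2^{j_m-1}<|k^{(m)}|\le 2^{j_m}$. In particular $|k^{(m)}|$ is comparable to $2^{j_m}$, the pairs $(k^{(m)},p^{(m)})$ are pairwise distinct (distinct blocks give strictly different $\delta_m$), and, since $j_{m+2}\ge j_m+2$, $|k^{(m+2)}|>2^{j_{m+2}-1}\ge 2\cdot 2^{j_m}\ge 2|k^{(m)}|$, so $\sum_m |k^{(m)}|^{-s}<\infty$ for every $s>0$. (ii) Let $R:=\sup\{|z_i|:z\in K,\ 1\le i\le n\}<\infty$. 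At most one coordinate of $k^{(m)}$ equals $-1$ and the rest are nonnegative, so $|k^{(m)}z|\le R(|k^{(m)}|+2)$, and hence for every $m$ large enough that $\delta_m<1$ we have $|p^{(m)}|\le R(|k^{(m)}|+2)+1\le l(|k^{(m)}|+1)$ with $l:=\lceil 2R+1\rceil$, which depends only on $K$. (iii) Writing $q_m:=|k^{(m)}|$ and $L_m:=\log(1/\delta_m)$ (positive for large $m$), the relations $q_m\le 2^{j_m}<2q_m$ make the divergence $\sum_m 2^{-j_m}L_m=+\infty$ equivalent to $\sum_m L_m/q_m=+\infty$.

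It remains to deduce $\sum_m L_m^{\,n+1+\varepsilon}/q_m^{\,n+1+\varepsilon/4}=+\infty$ from $\sum_m L_m/q_m=+\infty$; this is the step that substitutes for \eqref{eq:qn=infty}. Put $\theta:=\frac{n+\varepsilon/4}{n+\varepsilon}\in(0,1)$ and, beyond an index where $\delta_m<1$, split the $m$'s into $S_1=\{m:L_m\ge q_m^{\theta}\}$ and $S_2=\{m:L_m<q_m^{\theta}\}$. On $S_2$, $L_m/q_m<q_m^{\theta-1}=q_m^{-(1-\theta)}$, and $\sum_m q_m^{-(1-\theta)}<\infty$ by (i); hence $\sum_{m\in S_1}L_m/q_m=+\infty$. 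On $S_1$, raising $L_m\ge q_m^{\theta}$ to the power $n+\varepsilon$ gives $L_m^{\,n+\varepsilon}\ge q_m^{\,n+\varepsilon/4}$, so
\[
\frac{\left|\log|k^{(m)}z-p^{(m)}|^2\right|^{n+1+\varepsilon}}{|k^{(m)}|^{n+1+\varepsilon/4}}
= 2^{n+1+\varepsilon}\,\frac{L_m}{q_m}\cdot\frac{L_m^{n+\varepsilon}}{q_m^{n+\varepsilon/4}}
\;\ge\; 2^{n+1+\varepsilon}\,\frac{L_m}{q_m}.
\]
Since all terms of \eqref{eq:n+1+varep=+} are nonnegative and, by (i) and (ii), the pairs $(k^{(m)},p^{(m)})$ with $m\in S_1$ are admissible and distinct, summing this inequality over $m\in S_1$ bounds the left-hand side of \eqref{eq:n+1+varep=+} below by $2^{n+1+\varepsilon}\sum_{m\in S_1}L_m/q_m=+\infty$, as required.

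The main obstacle is fact (i): one must exploit the block structure of Lemma \ref{l:sequence} to trap $|k^{(m)}|$ strictly between $2^{j_m-1}$ and $2^{j_m}$. This single estimate simultaneously yields the equivalence in (iii), the geometric growth $q_{m+2}>2q_m$ (hence the convergence of $\sum_m q_m^{-(1-\theta)}$, which disposes of the indices in $S_2$ where $L_m$ is small), and, via the compactness bound (ii), the admissibility of the pairs in the sum. The concluding dichotomy is then a soft real-variable argument, and the exponents $n+1+\varepsilon$ and $n+1+\varepsilon/4$ in \eqref{eq:n+1+varep=+} are tuned exactly so that $\theta<1$.
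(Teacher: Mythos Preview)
Your proof is correct and follows the same overall architecture as the paper's: extract the minimizers $(k^{(m)},p^{(m)})$ from the dyadic blocks of Lemma~\ref{l:sequence}, verify they are distinct and satisfy $|p^{(m)}|\le l(|k^{(m)}|+1)$ for an $l$ depending only on $K$, and then bound the double sum in \eqref{eq:n+1+varep=+} from below by the subsum over these pairs.

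The one genuine difference is in how you pass from $\sum_m 2^{-j_m}\log\frac{1}{\omega(z,2^{j_m})}=\infty$ to the divergence of the weighted power series. The paper applies H\"older's inequality directly with exponents $n+1+\varepsilon$ and $\frac{n+1+\varepsilon}{n+\varepsilon}$, using only $|k^m|\le 2^{j_m}$ and the convergence of $\sum_m 2^{-j_m s}$ (which follows from strict monotonicity of $j_m$ alone). You instead establish the two-sided bound $2^{j_m-1}<q_m\le 2^{j_m}$ from the block structure \eqref{eq:sequence}, which lets you work with $q_m$ in place of $2^{j_m}$, and then run a threshold dichotomy on $L_m\gtrless q_m^\theta$ with $\theta=\frac{n+\varepsilon/4}{n+\varepsilon}$; the geometric growth $q_{m+2}>2q_m$ disposes of the sub-threshold terms. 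Your route is more elementary in that it avoids H\"older and makes the role of the exponent gap $\varepsilon-\varepsilon/4$ fully transparent, at the cost of needing the extra observation~(i). The paper's route is shorter since it never needs the lower bound on $|k^m|$. Both are valid; you also handle the degenerate case $\omega(z,2^j)=0$ explicitly, which the paper passes over in silence.
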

\begin{proof} Since $z$ does not satisfy the Brjuno condition \eqref{eq:BC}, we have $\omega(z,2^j)\to 0$ as $j\to \infty$. Then by Lemma  \ref{l:sequence}  there exists a sequence $\{j_m\}$ satisfying \eqref{eq:sequence}. Moreover, we have 
$$\sum_{m=1}^\infty\frac{1}{2^{j_m}}\log{\frac{1}{\omega(\lambda, 2^{j_m})}}=\infty$$
Take $0<\delta<1$. By applying H\"older's inequality,  we obtain the following inequality:
$$\sum_{m=1}^\infty\frac{1}{2^{j_m}}\bigg|\log{\frac{1}{\omega(z,2^{j_m})}}\bigg|\le\bigg(\sum_{m=1}^\infty\frac{1}{2^{j_m(n+1+\varepsilon)(1-\delta)}}\bigg|\log{\frac{1}{\omega(z,2^{j_m})}}\bigg|^{n+1+\varepsilon}\bigg)^{\frac{1}{n+1+\varepsilon}}\bigg(\sum_{m=1}^{\infty}\frac{1}{2^{j_m\frac{n+1+\varepsilon}{n+\varepsilon}\delta}}\bigg)^{\frac{n+\varepsilon}{n+1+\varepsilon}}.$$

It is well known that the second series on the right-hand side converges for any $\delta>0$. Let $\delta>0$ be sufficiently small so that $(n+1+\varepsilon)(1-\delta)\ge n+1+\frac{\varepsilon}{4}$.
Then, we have
\begin{align*}
\sum_{m=1}^{\infty}\frac{1}{2^{j_m(n+1+\frac{\varepsilon}{4})}}\bigg|\log\frac{1}{\omega(z,2^{j_m})}\bigg|^{n+1+\varepsilon}&\ge  \sum_{m=1}^\infty\frac{1}{2^{j_m(n+1+\varepsilon)(1-\delta)}}\bigg|\log{\frac{1}{\omega(z,2^{j_m})}}\bigg|^{n+1+\varepsilon}\\
&\ge C \sum_{m=1}^\infty\frac{1}{2^{j_m}}\bigg|\log{\frac{1}{\omega(z,2^{j_m})}}\bigg|=+\infty,
\end{align*}
where
$C=\bigg(\sum_{m=1}^{\infty}\frac{1}{2^{j_m\frac{n+1+\varepsilon}{n+\varepsilon}\delta}}\bigg)^{-\frac{n+\varepsilon}{n+1+\varepsilon}}$.

Take $l\in \mathbb{N}$, with $l>2$ satisfying $K\subset B(0,\frac{l}{2n})$. It is clear that for $|p| >l(|k|+1)$ we have $|kz-p|\ge 1$. Indeed, if  $|p| >l(|k|+1)$, since $z\in B(0,\frac{l}{2n})$ we have 
\begin{equation}\label{eq:kzp-1}
    |kz|= |k_1z_1+k_2z_2+...+k_nz_n|\le (|k_1|+|k_2|+...+|k_n|)\frac{l}{2}\le (|k|+1)l\le |p|-1.
\end{equation}
Hence,  if $\omega(z,2^{j_m})= |k_1^mz_1+k_2^mz_2+...+k_n^mz_n-p_m|$ for some $(k^m,p_m)\in \mathbb{N}_0\times\mathbb{Z}$ then since $\omega(z,2^{j_m})<1$ we have $|p_m|\le l(|k^m|+1)$. Moreover, since $\omega(z,2^{j_m})>\omega(z,2^{j_{m+1}})$, we have $(k^m,p_m)\ne (k^{\tilde m},p_{\tilde m})$ for $m\ne \tilde m$.
Consequently, since $|k^m|\le 2^{j_m}$ we have
\begin{align*}
\sum_{|k|=1,\text{ } k\in\mathbb{N}_0}^\infty\sum_{|p|=0}^{l(|k|+1)}\frac{\left|\log |kz-p|^2\right|^{n+1+\varepsilon}}{|k|^{n+1+\frac{\varepsilon}{4}}}&=\sum_{m=1}^{\infty}\frac{1}{|k_m|^{n+1+\frac{\varepsilon}{4}}}\bigg|\log|k^mz-p_m|^2\bigg|^{n+1+\varepsilon}+\\
&+\sum_{|k|=1,\text{ }k\not=k^m,\text{ } k\in\mathbb{N}_0}^\infty\sum_{p=0}^{l(|k|+1)}\frac{\left|\log |kz-p|^2\right|^{n+1+\varepsilon}}{|k|^{n+1+\frac{\varepsilon}{4}}}\\
&\ge \sum_{m=1}^{\infty}\frac{1}{2^{j_m(n+1+\frac{\varepsilon}{4})}}\bigg|\log|k^mz-p_m|^2\bigg|^{n+1+\varepsilon}=+\infty.\end{align*}
Hence, we have \eqref{eq:n+1+varep=+}.
\end{proof}

\section{Proof of the main result}\label{s:proofmain}

We recall that ${\mathcal{B}}_n$ denotes the complex numbers $z=(z_1,z_2,\dots,z_n)\in \mathbb{C}^n$ satisfying the Brjuno condition \eqref{eq:BC}.
\begin{theorem}
   For any $\sigma>n$, we have $C_\sigma(C{\mathcal{B}}_n)=0$,  where $C{\mathcal{B}}_n$ is the complement of ${\mathcal{B}}_n$ in $\mathbb{C}^n$. 
\end{theorem}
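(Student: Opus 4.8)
The plan is to construct an explicit measure $\mu$ supported on $C\mathcal{B}_n$ (more precisely, on a set whose complement relative to $C\mathcal{B}_n$ is already known to be small) whose potential $U^\mu$ is identically $+\infty$ on $C\mathcal{B}_n$; by property (2) of the $C_\sigma$-capacity this gives $C_\sigma(C\mathcal{B}_n)=0$. Since capacity is countably subadditive (property (4)) and the statement is local up to conformal changes of coordinates (property (6)), it suffices to work inside a fixed ball and show $C_\sigma(C\mathcal{B}_n\cap K)=0$ for every compact $K$; fix such a $K$ and let $l=l(K)\in\mathbb{N}$ be the integer furnished by Lemma \ref{l:bruno=+infty}.

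The measure should be the higher-dimensional analogue of \eqref{eq:mun=1}. For $k\in\mathbb{N}_0$ with $|k|\ge 1$ and $p\in\mathbb{Z}$ with $|p|\le l(|k|+1)$, the equation $kz=p$, i.e. $k_1z_1+\dots+k_nz_n=p$, defines a complex affine hyperplane $\Pi_{k,p}\subset\mathbb{C}^n$; let $\nu_{k,p}$ be the restriction of the $(2n-2)$-dimensional surface (Lebesgue) measure on $\Pi_{k,p}$ to the ball $B(0,R)$ containing $K$, normalized to total mass $1$. Then set
$$\mu=\sum_{\substack{k\in\mathbb{N}_0,\ |k|\ge 1}}\ \sum_{|p|\le l(|k|+1)}\frac{1}{|k|^{n+1+\varepsilon/4}}\,\nu_{k,p},$$
where $\varepsilon>0$ is chosen (depending on $\sigma$) so that $\sigma>n+\varepsilon$; the number of admissible $p$ for a given $k$ is $O(|k|)$ and the number of $k$ with $|k|=N$ is polynomial in $N$, so $\mu$ is a finite (up to normalization, probability) Borel measure supported in $\overline{B(0,R)}\supset K$. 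The heart of the argument is then the pointwise lower bound: for $z\in K$,
$$U^\mu(z)=\int k_\sigma(z,\xi)\,d\mu(\xi)\ \ge\ c\sum_{\substack{k\in\mathbb{N}_0,\ |k|\ge 1}}\ \sum_{|p|\le l(|k|+1)}\frac{1}{|k|^{n+1+\varepsilon/4}}\,\bigl|\log|kz-p|\bigr|^{\sigma}\cdot(\text{correction}),$$
and here Lemma \ref{l:intineq} does the local work: integrating $k_\sigma(z,\xi)=|\log\|z-\xi\||^{\sigma}\|z-\xi\|^{-2n+2}$ against the $(2n-2)$-dimensional measure $\nu_{k,p}$ over a transverse disk of radius $\sim\eta$ around the nearest point of $\Pi_{k,p}$ to $z$ reduces, after slicing, to an integral of the form $\int_0^\eta |\log(r^2+a)|^\sigma (r^2+a)^{-(n-1)}r^{2n-3}\,dr$ with $a=\operatorname{dist}(z,\Pi_{k,p})^2$, which by Lemma \ref{l:intineq} is bounded below by $A_1|\log a|^{\sigma+1}-A_2\gtrsim |\log|kz-p||^{\sigma+1}$. (The factor $\|k\|$ from rewriting $\operatorname{dist}(z,\Pi_{k,p})$ in terms of $|kz-p|$ is harmless: $|kz-p|=\|k\|\operatorname{dist}(z,\Pi_{k,p})$, and $\log\|k\|$ is absorbed into the polynomial weights.)

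With this lower bound in hand, $U^\mu(z)\gtrsim \sum_{k,p}|k|^{-(n+1+\varepsilon/4)}\bigl|\log|kz-p|\bigr|^{\sigma+1}$ plus nonnegative terms; since we arranged $\sigma\ge n+\varepsilon$ we have $\sigma+1\ge n+1+\varepsilon$, so each summand dominates the corresponding summand in \eqref{eq:n+1+varep=+}, and Lemma \ref{l:bruno=+infty} forces $U^\mu(z)=+\infty$ for every $z\in K\setminus\mathcal{B}_n$ — except we must be slightly careful, since Lemma \ref{l:bruno=+infty} uses the exponent $n+1+\varepsilon$ on $|\log|kz-p|^2|$ and weight $|k|^{-(n+1+\varepsilon/4)}$, which is exactly what we produced up to the harmless factor $2^{\sigma+1}$ from $|\log|kz-p|^2|=2|\log|kz-p||$ and up to choosing $\varepsilon$ with $\sigma=n+\varepsilon$ (any $\sigma>n$ works by taking $\varepsilon=\sigma-n$). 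Thus $U^\mu\equiv+\infty$ on $C\mathcal{B}_n\cap K$ off a set already contained in the union of the hyperplanes $\Pi_{k,p}$ — but those are proper analytic subsets, hence of zero $C_\sigma$-capacity by property (5), and there are countably many, so property (4) handles them; applying property (2) on $K$ then gives $C_\sigma(C\mathcal{B}_n\cap K)=0$, and a countable exhaustion of $\mathbb{C}^n$ by compacts together with property (4) finishes the proof. The main obstacle is the geometric estimate in the second paragraph: making precise that integrating the kernel against the $(2n-2)$-plane measure near $\Pi_{k,p}$ genuinely produces $|\log|kz-p||^{\sigma+1}$ with constants uniform in $(k,p,z)$, i.e. controlling the contribution away from the nearest point of $\Pi_{k,p}$ and verifying that the "$a<\eta<\tfrac12$" hypothesis of Lemma \ref{l:intineq} can be met after rescaling — this is where the bulk of the technical care goes, but it is routine once the slicing is set up.
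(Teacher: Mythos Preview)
Your proposal is correct and follows essentially the same route as the paper: the same weighted sum of hyperplane measures $\mu_{k,p}$ on $\{kw=p\}$, the same reduction via slicing to the radial integral handled by Lemma \ref{l:intineq}, and the same appeal to Lemma \ref{l:bruno=+infty} for divergence. The only cosmetic differences are that the paper does not normalize the $\mu_{k,p}$ and, instead of invoking $\operatorname{dist}(z,\Pi_{k,p})=|kz-p|/\|k\|$ and absorbing $\log\|k\|$, it uses an explicit affine change of variables $L$ to obtain the inequality $\|w-z\|^2\le 4(\|\tilde w'\|^2+|kz-p|^2)$ directly with $a=|kz-p|^2$; your handling of the $\|k\|$ factor is equally valid, and your extra caution about points lying on the $\Pi_{k,p}$ themselves is unnecessary (those points are resonant, hence already in $C\mathcal{B}_n$, and the potential is trivially $+\infty$ there) but harmless.
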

 \begin{proof} Note that $C_\sigma$-capacity vanishes on proper analytic subsets of $\mathbb{C}^n$,
 hence
 \begin{equation}\label{eq:Pi}
     \Pi:=\left\{w=(w_1,w_2,...,w_n)\in\mathbb{C}^n: \prod_{j=1}^n w_j=0\right\}
 \end{equation}
has zero $C_\sigma$-capacity. Since $C_\sigma$-capacity is countably sub-additive and $\Pi$ has zero $C_\sigma$-capacity,   it is enough to show that $C_\sigma(V\cap C{\mathcal{B}}_n)=0$ for any bounded open set $V$ satisfying
 \begin{equation}\label{eq:condition}
     \overline{V}\cap\Pi=\emptyset.
 \end{equation} 
 Fix a bounded open set $V$ satisfying \eqref{eq:condition}. Let $l\ge 1$ be an integer as in Lemma \ref{l:bruno=+infty} with $K=\overline{V}$. 
 Define a measure $\mu$ as follows 
$$\mu:=\sum_{|k|=1,\text{ } k\in\mathbb{N}_0}^\infty\sum_{|p|=0}^{l(|k|+1)}\frac{\mu_{k,p}}{|k|^{n+1+\frac{\varepsilon}{4}}}$$
where for $k\in\mathbb{N}_0$ and $p\in \mathbb{Z}$  the measure $\mu_{k,p}$ is the natural extension of the Lebesgue measure $\mathrm{Leb}_{k,p}$ on $kw=p$ to $\mathbb{C}^n$, i.e. for any bounded Borel set $E\subset \mathbb{C}^n$ we have $ \mu_{k,p}(E):=\mathrm{Leb}_{k,p}(E\cap \{kw=p\}) $.
Consider the restriction of $\mu$ to the set $\overline{V}$, denoted by $\tilde{\mu}:=\mu|_{\overline{V}}$.   We claim that $\tilde{\mu}$ is finite. Indeed, it is not difficult to see that there exists a constant $C>0$ independent of $k$ and $p$ such that $\mu_{k,p}(\overline{V})\le C$. Thus, we obtain
\begin{align*}
 \tilde{\mu}(\mathbb{C}^n)=\tilde{\mu}(\overline{V})\le C\sum_{|k|=1,\text{ } k\in\mathbb{N}_0}^\infty\sum_{|p|=0}^{l(|k|+1)}\frac{1}{|k|^{n+1+\frac{\varepsilon}{4}}}&\le (2l+1) C\sum_{|k|=1,\text{ } k\in\mathbb{N}_0}^\infty\frac{1}{|k|^{n+\frac{\varepsilon}{4}}}\\
 &\le \tilde{C}\sum_{j=1}^\infty\frac{1}{j^{1+\frac{\varepsilon}{4}}}<+\infty,   
\end{align*} 
where $\tilde C$ is a positive constant.

Next, we analyze the potential $U^{\tilde{\mu}}(z)$, which is given by

$$U^{\tilde{\mu}}(z)=\sum_{|k|=1,\text{ } k\in\mathbb{N}_0}^\infty\sum_{|p|=0}^{l(|k|+1)}\frac{1}{|k|^{n+1+\frac{\varepsilon}{4}}}\int_{\bar{V}}\frac{|\log||w-z|||^{{ n +\varepsilon}}}{||w-z||^{2n-2}}d\mu_{k,p}(w).$$
It is clear that if $z$ is far from $V$, then we have $U^{\tilde{\mu}}(z)<+\infty$. Next, we shall show that  it is $\infty$ on $V\cap C{\mathcal{B}}_n$.

\textbf{Claim.} \textit{We have $U^{\tilde{\mu}}(z)=+\infty$ for any $z\in V\cap C{\mathcal{B}}_n$.}

Assuming the claim, we will finish the proof. By applying the claim together with the second property of the  $C_\sigma$-capacity, we conclude that $C_\sigma(V\cap C{\mathcal{B}}_n)=0$. Hence, it remains to prove the claim.
\begin{proof}[{Proof of the claim}] Fix $z\in V\cap C{\mathcal{B}}_n$ and  $0<\eta<1/10$ with $B(z,3n\eta)\Subset V$. Define
$$\mathcal{N}_\eta=\left\{(k,p)\in \mathbb{N}_0\times \mathbb{Z}:|kz-p|<{\eta}\right\}.$$
Since $z\in V\cap C{\mathcal{B}}_n$, it is clear that $\mathcal{N}_\eta\ne \emptyset$  for any $\eta>0$. Similarly as in \eqref{eq:kzp-1} we can show that  for $(k,p)\in\mathcal N_\eta$ we have $|p|\le l(|k|+1)$.

Fix $(k,p)\in\mathcal N_\eta$. 
It is well known that the closest point  $\tilde{z}$ to $z$ in $\Pi_{k,p}=\{w\in \mathbb C^n: kw=p\}$ is 
$$\tilde{z}=z+\frac{p-kz}{\|k\|^2}k,$$
where $\|k\|=\sqrt{k_1^2+k_2^2+k_3^2+...+k_n^2}$.
It is not difficult to see that $\|\tilde{z}-z\|\le \frac{\eta}{2}$ and hence $B(\tilde z,\eta)\Subset V$. Without loss of generality assume that $k_n=\max_{1\le{i}\le{n}} k_i$. Let's make the following unitary linear substitution $L:\mathbb{C}^n\to \mathbb{C}^n $ as follows
\begin{align*}
   w_j&=\tilde w_j+\tilde z_j, \,\ 1\le j\le n-1,\\
   w_n&=\tilde w_n+\tilde z_n-\sum_{j=1}^{n-1} \frac{k_j}{k_n}\tilde w_j.
\end{align*}
Then
\begin{align*}
kw-p&=\sum\limits_{j=1}^n k_j w_j-p=\sum\limits_{j=1}^{n}k_j\tilde z_j+k_n\tilde w_n-p=\\
&=\sum\limits_{j=1}^nk_j\left(z_j+\frac{p-kz}{\|k\|^2}k_j\right)+k_n\tilde{w}_n-p=k_n\tilde{w}_n.\end{align*}
So, we have $$L(\Pi_{k,p})=\{\tilde{w}=(\tilde w_1,...,\tilde w_n)\in\mathbb{C}^n:\tilde w_n=0\}.$$
Since $L$ is a translation we have $L^*(\mathrm{Leb}_{k,p})=\mathrm{Leb}_{n-1}(\tilde w')$, where $\mathrm{Leb}_{n-1}(\tilde{w}')$ is the Lebesgue measure in $\mathbb{C}^{n-1}\times\{0\}$ and $\tilde{w}'=(\tilde{w}_1,...,\tilde w_{n-1})$. 

Let us now show that for $w\in\Pi_{k,p}$ and $(\tilde w',0)=L^{-1}(w)$ we have 
\begin{equation}\label{eq:ineq}
    \|w-z\|^2\le 4(\|\tilde w'\|^2+|kz-p|^2),
\end{equation}
where $\|\tilde w'\|^2=|\tilde w_1|^2+...+|\tilde w_{n-1}|^2$. Indeed,
 for $1\le j\le n-1$ we obtain the following
\begin{align*}
  \left| w_j-z_j\right|^2&=\left|\tilde{w}_j+\tilde{z}_j-z_j\right|^2=\left|\tilde{w}_j+\frac{k_j}{\|k\|^2}(p-k_jz_j)\right|^2\\
  & \le2\left(|\tilde{w}_j|^2+\left|\frac{k_j}{\|k\|^2}(p-k_jz_j)\right|^2\right).
\end{align*}
Similarly, since $\tilde w_n=0$ we have
 \begin{align*}
     |w_n-z_n|^2&=\left|\tilde{w}_n+\tilde{z}_n-\sum_{j=1}^{n-1}\frac{k_j}{k_n}\tilde w_j-z_n\right|^2\\
     &=\left|\tilde{w}_n+\frac{k_n}{\|k\|^2}(p-k_nz_n)-\sum_{j=1}^{n-1}\frac{k_j}{k_n}\tilde w_j\right|^2  \\
     &\le2\left|\frac{k_n}{\|k\|^2}(p-k_nz_n)\right|^2+2\sum_{j=1}^{n-1}\frac{k_j^2}{k_n^2}\left|\tilde{w}_j\right|^2.
     \end{align*}
Thus, we conclude 
\begin{align*}
   \|w-z\|^2&=|w_1-z_1|^2+|w_2-z_2|^2+...+|w_{n-1}-z_{n-1}|^2+|w_n-z_n|^2\\
   &\le2\sum_{j=1}^{n-1}\left(|\tilde{w_j}|^2+\left|\frac{k_j}{\|k\|^2}(p-k_jz_j)\right|^2\right)+2\left|\frac{k_n}{\|k\|^2}(p-k_nz_n)\right|^2+
   2\sum_{j=1}^{n-1}\frac{k_j^2}{k_n^2}\left|\tilde{w_j}\right|^2\\
   &=2\sum_{j=1}^{n-1}\left(1+\frac{k_j^2}{k_n^2}\right)|\tilde{w_j}|^2+2\sum_{j=1}^{n}\left|\frac{k_j}{\|k\|^2}(p-k_jz_j)\right|^2\\
   & \le 4(\|\tilde w'\|^2+|kz-p|^2),
\end{align*}
where in the last step we used $|k_j|\le k_n$. Consequently, we obtain \eqref{eq:ineq}. Moreover, thanks to \eqref{eq:ineq} and  since $|kz-p|<\eta$  we have 
\begin{equation}\label{eq:subset}
L(B(0,\eta))\subset B(z,3\eta)\Subset V. 
\end{equation}

Let us now show that $U^{\tilde{\mu}}(z)=+\infty$.  Note that $\frac{|\log|r||^{{ n +\varepsilon}}}{r^{2n-2}}$ is  decreasing for  $0<r<3\eta$. Thanks to \eqref{eq:ineq} and \eqref{eq:subset} there exists a constant $C_1$ independent of $k,p$ such that
 \begin{align*}
    \int_{\overline{V}}\frac{|\log||w-z|||^{{ n +\varepsilon}}}{||w-z||^{2n-2}}d\mu_{k,p}(w)&=\int_{\overline{V}\cap \Pi_{k,p}}\frac{|\log||w-z|||^{{ n +\varepsilon}}}{||w-z||^{2n-2}}d\,\mathrm{Leb}_{k,p}(w)\\
    &=\int_{L^{-1}(\overline{V}\cap \Pi_{k,p})}\frac{|\log||L^{-1}(w)-z|||^{{ n +\varepsilon}}}{||L^{-1}(w)-z||^{2n-2}}L^*(d\,\mathrm{Leb}_{k,p}(w))\\
    &\ge
    C_1\int_{|\tilde {w}'|<\eta}\frac{\left|\log\left(\|\tilde{w}'\|^2+\|kz-p\|^2\right) \right|^{{ n +\varepsilon}}}{\left(\|\tilde{w}'\|^2+\left\|kz-p\right\|^2\right)^{n-1}}d\,\mathrm{Leb}_{n-1}(w').
 \end{align*}
After going to spherical coordinates we obtain 
\begin{align*}
\int_{|\tilde{w}'|<\eta}\frac{\left|\log\left(\|\tilde{w}'\|^2+\|kz-p\|^2\right) \right|^{{n +\varepsilon}}}{\left(\|\tilde{w}'\|^2+\left\|kz-p\right\|^2\right)^{n-1}}d\,\mathrm{Leb}_{n-1}(w')&= C_2 \int_0^\eta\frac{\log^{{ n +\varepsilon}}{(r^2+\|kz-p\|^2)}}{(r^2+\|kz-p\|^2)^{n-1}}r^{2n-3}dr
\end{align*}
for some $C_2>0$ independent of $k,p$ and $z$.
Thanks to Lemma \ref{l:intineq}  there exist positive constants $A_1,A_2$ independent of $k,p$ and $z$  such that 

\begin{align*}
    \int_0^\eta \frac{\log^{{ n +\varepsilon}}{(r^2+\|kz-p\|^2)}}{(r^2+\|kz-p\|^2)^{n-1}}r^{2n-3}dr&\ge A_1{|\log \|kz-p\|^2|^{n+1+\varepsilon}}-A_2.
\end{align*}

Finally we have,
\begin{equation}\label{eq:lastineq} \int_{\overline{V}}\frac{|\log||w-z|||^{{ n +\varepsilon}}}{||w-z||^{2n-2}}d\mu_{k,p}(w)\ge C_1C_2A_1{|\log \|kz-p\|^2|^{n+1+\varepsilon}}-C_1C_2A_2.\end{equation}
Put $C_3:=C_1C_2A_1,$ and
$$A_3:=\sum_{|k|=1,\text{ } k\in\mathbb{N}_0}^\infty\sum_{|p|=0}^{l(|k|+1)}\frac{1}{|k|^{n+1+\frac{\varepsilon}{4}}}<\infty,$$ 
$$A_4:=\sum_{|k|=1,\text{ } k\in\mathbb{N}_0}^\infty\sum_{|p|=0}^{l(|k|+1)}\frac{\log^{n+1+\varepsilon}\left((|k|+1)^2\max_{w\in\overline{V}}\|w\|^2+|p|+1\right)}{|k|^{n+1+\frac{\varepsilon}{4}}}<\infty.$$ 
 Note that  for $(k,p)\notin\mathcal{N}_\eta$ we have $|kz-p|\ge\eta$ and hence $$|\log{\|kz-p\|^2}|\le \log\left((|k|+1)^2\max_{w\in\overline{V}}\|w\|^2+|p|+1\right)+ |\log{\eta^2}|.$$  Then, by the last inequality and thanks to \eqref{eq:lastineq} and the fact that $(k,p)\in\mathcal N_\eta$ implies $|p|\le l(|k|+1)$, we obtain
\begin{align*}
    U^{\tilde{\mu}}(z)+C_3(A_3|\log{\eta^2}|^{n+1+\varepsilon}+A_4)
   & \ge C_1C_2A_1\sum_{|k|=1,\text{ } k\in\mathbb{N}_0}^\infty\sum_{|p|=0}^{l(|k|+1)}\frac{1}{|k|^{n+1+\frac{\varepsilon}{4}}}|\log{|kz-p|^2}|^{n+1+\varepsilon}-C_1C_2A_2A_3.
\end{align*}
So there are positive constants $C_4,C_5$ such that  
$$U^{\tilde{\mu}}(z)\ge C_4\sum_{|k|=1,\text{ } k\in\mathbb{N}_0}^\infty\sum_{|p|=0}^{l(|k|+1)}\frac{1}{|k|^{n+1+\frac{\varepsilon}{4}}}|\log{|kz-p|^2}|^{n+1+\varepsilon}-C_5.$$
Thanks to Lemma \ref{l:bruno=+infty}  we have 
$$\sum_{|k|=1,\text{ } k\in\mathbb{N}_0}^\infty\sum_{|p|=0}^{l(|k|+1)}\frac{1}{|k|^{n+1+\frac{\varepsilon}{4}}}|\log{\|kz-p\|^2}|^{n+1+\varepsilon}=+\infty$$
and hence we have  $U^{\tilde{\mu}}(z)=+\infty$. 
 
\end{proof}   

\end{proof}
We now complete proving our main result.
\begin{proof}[{Proof of Theorem \ref{t:main}}]   Fix simply connected open set $B\subset \mathbb{C}^n$ such that, $\overline{B}\cap  \Pi=\emptyset$ and $$\psi(w)=\left(\frac{1}{2\pi{i}}\log{w_1}, \dots,\frac{1}{2\pi{i}}\log{w_n}\right)$$  defines a  conformal map on $B$, where $\Pi$ is defined as \eqref{eq:Pi}.  Thanks to Remark \ref{re:equivBrj}, $\lambda\in E\cap B$ if and only if $\psi(\lambda)\in C\mathcal{B}_n$.
By Theorem 4.1, we have $C_\sigma(C\mathcal{B}_n)=0$ for any $\sigma>n$. Consequently, by  propery 6 of  $C_\sigma$-capacity  we have $C_\sigma(E\cap B)=0$ for any for any $\sigma>n$. Since  $\psi$ is locally conform outside $\Pi$, and  $\Pi$ has zero $C_\sigma$-capacity (i.e. $C_\sigma(\Pi)=0$) and  that $C_\sigma$-capacity is countably sub-additive it follows that $C_\sigma(E)=0$, for any $\sigma>n$. Thanks to property 3 of $C_\sigma$-capacity the second assertion follows.
\end{proof}

\end{document}